\documentclass[a4paper,11pt, leqno]{amsart}
\usepackage{amsmath,amssymb,amsthm,mathtools}
\usepackage{enumitem}
\usepackage{hyperref}
\usepackage{mathrsfs}
\usepackage{tikz}
\usepackage{comment}
\usepackage{todonotes}

\hypersetup{
	colorlinks=true,
	linkcolor=blue,
	citecolor=blue,
	urlcolor=blue
}

\numberwithin{equation}{section}

\theoremstyle{definition}
\newtheorem{definition}{Definition}[section]
\newtheorem{remark}[definition]{Remark}

\newtheorem{question}[definition]{Question}

\theoremstyle{plain}
\newtheorem{theorem}[definition]{Theorem}

\newtheorem{corollary}[definition]{Corollary}
\newtheorem{lemma}[definition]{Lemma}

\newcommand{\C}{\mathbb{C}}

\newcommand{\D}{\mathbb D}

\title[Quasiconformal parametrizations and circular length]
{Length control for foliations by Jordan curves via smooth quasiconformal maps}
\date{}
\author{Abhishek Pandey}
\address{Department of Mathematics and Statistics, University of Jyv\"askyl\"a,
P.O. Box 35, FI-40014 University of Jyv\"askyl\"a, Finland}
\email{pandey.a.pandey@jyu.fi}

\subjclass[2020]{Primary 30C62; Secondary 30H10, 30C55}

\keywords{Quasiconformal mapping, Hardy space, arc-length,
radial stretch, univalent function}
\keywords{Quasiconformal mapping, Hardy space, arc-length, radial stretch, univalent function, Jordan curve}
\thanks{A. Pandey was supported by the Research Council of Finland, project number 360505.}
\begin{document}

\begin{abstract}
Let $\Omega\subsetneq\mathbb C$ be a non-empty simply connected domain and $0<p<2$. Bouchala and Koskela [Proc. Amer. Math. Soc. (2024)] proved that there exists a quasiconformal map $f:\mathbb D\to\Omega$ such that $\int_0^1\ell(f(r S^1))^p\,dr<\infty$, and asked whether this conclusion holds at $p=2$, or even for every $0<p<2+\varepsilon$ with some $\varepsilon>0$. We answer this question affirmatively with $\varepsilon=2/157$ by constructing a single smooth $5$-quasiconformal map $f:\mathbb D\to\Omega$ satisfying this integrability condition simultaneously for every $0<p<2+2/157$.
\end{abstract}

\maketitle

\section{Introduction}

For $0<p<\infty$, the Hardy space $H^p$ is the set of all analytic functions $f:\D\to\C$ such that
\begin{equation}\label{eq:Hardy-definition}
 \|f\|_{H^p}^p:=\sup_{0<r<1} \int_{S^1}|f(r\omega)|^p d\sigma(\omega) =\sup_{0<r<1}\int_0^{2\pi}|f(re^{i\theta})|^p\,d\theta<\infty.
\end{equation}
Here $S^1=\{z\in\C:|z|=1\}$ and $\sigma$ denotes Euclidean arclength measure on $S^1$, so that $\sigma(S^1)=2\pi$.

Astala and Koskela~\cite{AK} developed the $H^p$-theory of quasiconformal mappings by defining a quasiconformal map $f$ of $\D$ to belong to $H^p$ if \eqref{eq:Hardy-definition} holds.

For a homeomorphism $f:\D\to f(\D)\subset\C$, define
\begin{equation}\label{eq:length-function}
 L_f(r):=\ell\bigl(f(rS^1)\bigr),\quad 0<r<1,
\end{equation}
where $\ell$ denotes Euclidean arc-length, with the value
$+\infty$ allowed.

Bouchala and Koskela proved the following two existence results in
\cite{BK}.

\begin{theorem}\cite[Theorem~1.1]{BK}\label{thm:BK-Hardy}
 Let $\Omega\subsetneq\C$ be a non-empty simply connected domain. Let $p\in (0, \infty)$. Then there is a quasiconformal map $f:\D\to \Omega$ such that $f\in H^p$.
\end{theorem}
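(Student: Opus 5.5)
Our plan is to precompose a conformal map with a radial stretch. Let $g:\D\to\Omega$ be a Riemann map, which exists by the Riemann mapping theorem since $\Omega\subsetneq\C$ is simply connected and non-empty. By the classical Hardy-space theory of univalent functions, $g\in H^q$ for every $0<q<1/2$; this follows, for instance, from the Koebe growth estimate $|g(z)-g(0)|\le 4|g'(0)|\,|z|/(1-|z|)^2$ combined with Prawitz's inequality. Hence $g$ itself does the job when $p<1/2$. For general $p$, we improve integrability by compressing the boundary behaviour. Set $f=g\circ h_K$, where $h_K(z)=|z|^{K-1}z$ for a large $K\ge 1$ is the radial stretch. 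Then $h_K$ is a $K$-quasiconformal homeomorphism of $\D$ onto itself, so $f$ is a quasiconformal map of $\D$ onto $\Omega$.

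The key computation is to show that a radial change of variable does not help by itself: $f(r\omega)=g(r^K\omega)$, so $\sup_r\int|f(r\omega)|^p\,d\sigma=\sup_s\int|g(s\omega)|^p\,d\sigma$. Thus we need the stretch to push points \emph{towards} the origin more strongly near the boundary. The correct approach is to use a quasiconformal self-map that sends $\D$ onto $\D$ while making the integral means uniformly bounded. Since every circle $rS^1$ is mapped onto a circle, a radial map alone never changes the $\sup$. We therefore abandon radial maps and instead change the target: write $\Omega=g(\D)$ and exhaust it by $\Omega_s=g(s\D)$.

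We now use a more robust strategy. Choose a quasiconformal map $\phi:\Omega\to\Omega$, a homeomorphism, such that $|\phi(w)|$ stays bounded by a slowly growing function of $\mathrm{dist}(w,\partial\Omega)^{-1}$. Concretely, in the hyperbolic metric of $\Omega$, $g$ transports the problem to $\D$. The growth of $|g(z)|$ is at most $C(1-|z|)^{-2}$. Pick an increasing homeomorphism $\rho:[0,1)\to[0,1)$ and set $F(z)=g(\rho(|z|)z/|z|)$. As computed above, the integral means of $F$ over $rS^1$ equal those of $g$ over $\rho(r)S^1$, so the supremum is unchanged. Hence the gain must come from a non-radial map. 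The plan is a Whitney-type decomposition of $\D$ into dyadic boxes. On the set $E_n$ of boundary arcs where $|g|$ on the circle of radius $1-2^{-n}$ is large, we delay the arrival of $f$: the map $h$ sends circles $rS^1$ to curves that reach near $\partial\D$ only over arcs where $|g|$ is moderate. We then bound $\int|f(r\omega)|^p$ by Koebe distortion on each box and sum a geometric series, using that the angular measure of arcs where $|g(\cdot)|>2^{kn}$ decays like a negative power obtained from univalence, via the $H^q$ membership for $q<1/2$.

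The main obstacle is making $h$ quasiconformal with uniform $K$ while performing this arc-dependent delay. We would first try a map that is radial on each Whitney sector with a slowly varying exponent, and use a bounded-distortion estimate for gluing adjacent sectors. The delays must then be chosen consistently, so that neighbouring sectors differ by at most one generation.
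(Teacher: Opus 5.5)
Your first two paragraphs correctly show that precomposing the Riemann map with a radial self-map of $\D$ cannot help. Such a map sends each circle $rS^1$ onto a circle $\rho(r)S^1$ and preserves angles, so the supremum of the integral means is unchanged. What follows, however, is not a proof. The Whitney-type self-map $h$ of $\D$ with arc-dependent delays is never defined. You give no relation between the distortion $K$ and the exponent $p$. The gluing of adjacent sectors with uniformly bounded distortion is explicitly left open as ``the main obstacle''. The claimed geometric series over dyadic boxes is never estimated. As it stands, only the case $p<1/2$ is established, and it is settled by the Riemann map itself.

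The missing idea is to put the radial stretch on the \emph{target} side, centred at a point outside $\Omega$. There it compresses moduli, whereas on the domain side it only reparametrizes circles. The construction is as follows.
\begin{itemize}
\item Fix $a\in\C\setminus\Omega$ and an integer $K>2p$. Let $F(z)=a+(z-a)|z-a|^{1/K-1}$, which is a $K$-quasiconformal homeomorphism of $\C$ fixing $a$.
\item Then $D=F^{-1}(\Omega)$ is a non-empty simply connected proper subdomain of $\C$ that omits $a$. Take a Riemann map $\varphi:\D\to D$ and set $f=F\circ\varphi$. This is a $K$-quasiconformal map of $\D$ onto $\Omega$.
\item Since $|f-a|=|\varphi-a|^{1/K}$, you get $\int_{S^1}|f(r\omega)-a|^p\,d\sigma(\omega)=\int_{S^1}|\varphi(r\omega)-a|^{p/K}\,d\sigma(\omega)$.
\item This is bounded uniformly in $r$ by Prawitz's theorem applied to the univalent function $\varphi-a$, because $p/K<1/2$. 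Together with $|f|\le|f-a|+|a|$, this gives $f\in H^p$.
\end{itemize}

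Your remark about choosing a quasiconformal self-map $\phi$ of $\Omega$ with controlled modulus was pointing in this direction. The point is that you need not map $\Omega$ to itself. Pull $\Omega$ back by the modulus-compressing map $F$ and uniformize the pulled-back domain instead. The only analytic input is then the universal $H^q$ bound, $q<1/2$, for univalent functions.
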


\begin{theorem}\label{thm:BK-length}\cite[Corollary~1.2]{BK}
 Let $\Omega\subsetneq\C$ be a non-empty simply connected domain. Let $0<p<2$. Then there is a quasiconformal map $f:\D\to\Omega$ such that
 \begin{equation}\label{eq:BK-length}
  \int_0^1 L_f(r)^p\,dr<\infty.
 \end{equation}
\end{theorem}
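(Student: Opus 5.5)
The plan is to obtain \eqref{eq:BK-length} from Theorem~\ref{thm:BK-Hardy} using only two facts: a weighted Cauchy--Schwarz inequality on each circle, and the quasiconformal area inequality $|Df|^2\le K J_f$.

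\emph{Choice of the map.} Fix $s>2$. By Theorem~\ref{thm:BK-Hardy}, take a $K$-quasiconformal $f:\D\to\Omega$ with $f\in H^s$, so that
\[
M:=\sup_{0<r<1}\int_0^{2\pi}\bigl(1+|f(re^{i\theta})|\bigr)^s\,d\theta<\infty .
\]
Quasiconformal maps are ACL and belong to $W^{1,q}_{\mathrm{loc}}$ for some $q>2$. Hence for a.e.\ $r\in(0,1)$ the map $\theta\mapsto f(re^{i\theta})$ is absolutely continuous, and
\[
L_f(r)=\int_0^{2\pi}\bigl|\partial_\theta f(re^{i\theta})\bigr|\,d\theta\le r\int_0^{2\pi}|Df(re^{i\theta})|\,d\theta .
\]
Since $p<2$, Hölder's inequality on $(0,1)$ gives $\int_0^1 L_f^p\,dr\le\bigl(\int_0^1 L_f^2\,dr\bigr)^{p/2}$. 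So it suffices to prove $\int_0^1 L_f(r)^2\,dr<\infty$.

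\emph{The range $1/2<r<1$.} Insert the weight $w(y)=(1+|y|)^{-s}$ and apply Cauchy--Schwarz on the circle:
\[
L_f(r)^2\le \int_0^{2\pi}|Df|^2\,w(f)\,d\theta\cdot\int_0^{2\pi}(1+|f|)^s\,d\theta\le M\int_0^{2\pi}|Df(re^{i\theta})|^2\,w(f(re^{i\theta}))\,d\theta .
\]
Integrate over $r\in(1/2,1)$. Use $1\le 2r$, $|Df|^2\le KJ_f$ a.e., and the change of variables formula, which is valid for quasiconformal maps since they satisfy Lusin's condition $(N)$. This yields
\[
\int_{1/2}^1 L_f(r)^2\,dr\le 2M\int_{\D}|Df|^2\,w(f)\,dA\le 2MK\int_\Omega (1+|y|)^{-s}\,dA(y)\le 2MK\int_{\C}(1+|y|)^{-s}\,dA(y),
\]
and the last integral is finite precisely because $s>2$.

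\emph{The range $0<r<1/2$.} This is the step I expect to need care, since the weight $r$ in polar coordinates now degenerates. On the compact set $\overline{D(0,1/2)}$, $f$ is bounded, so $w(f)$ is bounded below and Cauchy--Schwarz gives
\[
L_f(r)^2\le 2\pi r^2\int_0^{2\pi}|Df|^2\,d\theta .
\]
Therefore
\[
\int_0^{1/2}L_f^2\,dr\le 2\pi\int_{D(0,1/2)}|Df|^2\,r\,dA\le C\,\|Df\|_{L^2(D(0,1/2))}^2<\infty,
\]
because $f\in W^{1,2}_{\mathrm{loc}}$. Here the extra factor of $r$ coming from $|\partial_\theta f|\le r|Df|$ absorbs the polar weight, so no higher integrability is needed.

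Combining the two ranges gives $\int_0^1L_f^2\,dr<\infty$, and hence \eqref{eq:BK-length} for every $0<p\le2$. Possible gaps to verify in the write-up are the precise a.e.\ identification of $L_f(r)$ with $\int|\partial_\theta f|\,d\theta$ on a.e.\ circle, and the change of variables step. If both hold, the argument also covers the endpoint $p=2$, which the abstract states is new. That tension is a reason to scrutinize these two steps carefully rather than assume them.
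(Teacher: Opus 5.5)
Your argument is correct, and it takes a genuinely different route from the paper. The paper does not prove this statement itself; it cites it from \cite{BK} and recovers it as the case $p<2$ of Theorem~\ref{thm:main}. That proof uses the specific map $f=\mathcal R_{K,a}\circ\varphi$, the comparison $L_f\le KL_g$ with the univalent root $g=(\varphi-a)^{1/K}$, Prawitz's theorem, and the Gonz\'alez--Pel\'aez arc-length theorem. You instead take an arbitrary quasiconformal $f\in H^s$ with $s>2$ and use only three ingredients: a weighted Cauchy--Schwarz inequality on circles, $|Df|^2\le KJ_f$, and the area formula.

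The two steps you flag are standard. A quasiconformal map is absolutely continuous on almost every circle $|z|=r$, with $\partial_\theta f=Df\,[ire^{i\theta}]$ a.e.\ there, so $L_f(r)=\int_0^{2\pi}|\partial_\theta f|\,d\theta$ for a.e.\ $r$. The change of variables $\int_\D u(f)J_f\,dA=\int_\Omega u\,dA$ for Borel $u\ge0$ holds because quasiconformal maps are differentiable a.e.\ and satisfy Lusin's condition $(N)$; in fact you only need the inequality $\le$, which holds for any a.e.\ differentiable homeomorphism.

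The split at $r=1/2$ is unnecessary. Since $r^2\le r$ on $(0,1)$, you get directly $\int_0^1L_f(r)^2\,dr\le MK\int_\C(1+|y|)^{-s}\,dA(y)$.

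There is no real tension with the paper. Your argument shows that the endpoint $p=2$ of the Bouchala--Koskela question already follows by elementary means from Theorem~\ref{thm:BK-Hardy}, or from Theorem~\ref{thm:transfer} with $K\ge5$. It also applies to every quasiconformal map in $H^s$ with $s>2$, not only to a special construction. What it cannot do is pass beyond $p=2$: for $p>2$ the factor $|Df|^p$ is no longer controlled by $J_f$. This is exactly where the paper needs the deep univalent input of Theorem~\ref{thm:GP}, transported through $L_f\le KL_g$, to reach $p<2+2/157$.
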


Bouchala and Koskela asked the following question:
\begin{question}
Does the claim of Theorem~\ref{thm:BK-length} also hold
for $p=2$, or even for all $0<p<2+\varepsilon$
for some $\varepsilon>0$?
\end{question}
We answer this question affirmatively as follows.

\begin{theorem}\label{thm:main}
Let $\Omega\subsetneq\C$ be a non-empty simply connected domain and let $K\geq1$ be an integer such that $2+2/157<K/2$. Then there exists a $K$-quasiconformal map $f:\D\to\Omega$
such that
\begin{equation}\label{eq:main-length}
  \int_0^1 L_f(r)^p\,dr<\infty,
 \end{equation}
for every $0<p<2+2/157$. Moreover, $f\in H^p$ for every $0<p<K/2$.
In particular, one may take $K=5$.
\end{theorem}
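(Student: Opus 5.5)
The construction will be $f=\varphi\circ g$, where $\varphi:\D\to\Omega$ is a Riemann map normalized by $\varphi(0)=w_0$ and $g:\D\to\D$ is a smooth $K$-quasiconformal radial stretch,
\[
g(z)=|z|^{K-1}z,
\]
possibly smoothed near the origin so that it stays smooth and $K$-quasiconformal. Then $f$ is $K$-quasiconformal, and the circle $rS^1$ is sent onto the circle $r^KS^1$. Hence
\[
L_f(r)=\ell\bigl(\varphi(r^KS^1)\bigr)=r^K\int_0^{2\pi}|\varphi'(r^Ke^{i\theta})|\,d\theta=:r^K M(r^K).
\]
Substituting $s=r^K$ gives $dr=K^{-1}s^{1/K-1}\,ds$, so
\[
\int_0^1 L_f(r)^p\,dr\asymp\int_0^1 M(s)^p\,s^{p+1/K-1}\,ds .
\]
The factor $s^{p+1/K-1}$ only affects the behaviour near $s=0$, where $M$ is bounded, so it is harmless. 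Everything therefore reduces to integrability of $M(s)^p$ near $s=1$. I realize this is suspicious: stretching alone does not improve the behaviour at $s\to1$. The radial stretch should instead be $g(z)=|z|^{a}z/|z|$ with the stretch placed so that it compresses near the boundary. Concretely, take
\[
g(z)=\bigl(1-(1-|z|)^{K}\bigr)\frac{z}{|z|}\quad\text{near } S^1 .
\]
Its radial derivative is $K(1-|z|)^{K-1}$ and its tangential stretch is about $1$. This map is not quasiconformal, so the plan has to be adjusted to respect the bound $K$. The admissible version is $1-|g(z)|=(1-|z|)^K$, realized as the stretch $\rho\mapsto\rho^{1/K}$ in the coordinate $\log\frac1{1-|z|}$. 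This is $K$-quasiconformal up to constants in the hyperbolic sense, as in \cite{BK}. With $t=1-s$ and $1-r=t^{1/K}$, we get $dr=K^{-1}t^{1/K-1}\,dt$, and therefore
\[
\int_0^1L_f^p\,dr\asymp\int_0^1 M(1-t)^p\,t^{1/K-1}\,dt .
\]

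The key input is a universal growth bound for the length integral of a univalent function. Since $\varphi'$ is nonvanishing, $(\varphi')^{1/2}$ is well defined, and by the integral-means spectrum for univalent functions we have $M(s)\lesssim (1-s)^{-\beta}$ for some $\beta<1/2$ universal. From $|\varphi'(z)|\lesssim(1-|z|)^{-3}$ (Koebe) and the known bounds on $B(1)$ we get $M(s)\lesssim(1-s)^{-B(1)-\epsilon}$, where $B(1)\le 0.46$ (Hedenmalm–Shimorin type bounds). The integral then converges whenever
\[
p\,B(1)<\tfrac1K .
\]

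This is where the trouble appears: the condition $pB(1)<1/K$ forces large $p$ to need small $K$, the opposite of what the theorem asserts. So the stretch should expand, $1-|g(z)|=(1-|z|)^{1/K}$, which is $K$-quasiconformal by the same computation. Then $dr=K\,t^{K-1}\,dt$ and the condition becomes
\[
p\,\beta<K .
\]
To reach $p$ slightly above $2$ one needs $\beta$ below $K/2$ with room to spare, and this is where the specific number $2/157$ must come from. I expect it comes from a quantitative estimate: the plain Koebe bound $\beta=1$ already gives every $p<K$, but the real difficulty is that $f\in H^p$ and the length bound must hold on the actual circles, not in averaged form. The main obstacle will be the explicit integral-means constant, presumably $B(1)\le 1/2-1/157$ or similar, combined with smoothing $g$ at the origin. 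For the $H^p$ claim I would use $|\varphi(z)|\lesssim(1-|z|)^{-2}$ composed with the stretch, which gives $|f(z)|\lesssim(1-|z|)^{-2/K}$, and this lies in $H^p$ for $p<K/2$.
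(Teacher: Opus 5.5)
\paragraph{The radial pre-composition cannot work.} Your construction is $f=\varphi\circ g$, with $\varphi:\D\to\Omega$ a Riemann map and $g$ a circle-preserving radial self-map of $\D$. The step that breaks is the claim that the expanding stretch $1-|g(z)|=(1-|z|)^{1/K}$ is $K$-quasiconformal. For $g(re^{i\theta})=\rho(r)e^{i\theta}$ the singular values of $Dg$ are $\rho'(r)$ and $\rho(r)/r$. So $K$-quasiconformality forces $K^{-1}\le r\rho'(r)/\rho(r)\le K$. Near $r=1$ this gives $\rho'\asymp1$, hence $1-\rho(r)\asymp 1-r$. Both of your boundary stretches have $\rho'\to0$ or $\rho'\to\infty$, so their distortion is unbounded. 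Rewriting them in a hyperbolic coordinate does not help, because quasiconformality is invariant under conformal change of metric.

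For every admissible radial $g$ you therefore have $L_f(r)=L_\varphi(\rho(r))$ with $\rho$ bi-Lipschitz near $1$. Also the $p$-means of $f$ on $rS^1$ equal those of $\varphi$ on $\rho(r)S^1$. Thus $f\in H^p$ iff $\varphi\in H^p$, and $\int_0^1L_f^p\,dr<\infty$ iff $\int_0^1L_\varphi^p\,ds<\infty$. For $\Omega=\C\setminus(-\infty,-1/4]$ with $\varphi$ the Koebe function, $L_\varphi(s)\asymp(1-s)^{-2}$, so both conclusions fail as soon as $p\ge 1/2$.

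The growth inputs you invoke are also unavailable. Bounds such as $M_1(s,\varphi')\lesssim(1-s)^{-\beta}$ with $\beta<1/2$, or $B(1)\le0.46$, concern bounded univalent functions; for arbitrary $\Omega$ the sharp exponent is $2$. And a pointwise bound $|f(z)|\lesssim(1-|z|)^{-2/K}$ does not control $\sup_r\int_0^{2\pi}|f(re^{i\theta})|^p\,d\theta$, so it cannot give $f\in H^p$.

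\paragraph{The missing idea.} Put the quasiconformal distortion on the target side, centred at a point $a\notin\Omega$. The paper uses the $K$-quasiconformal power map $\mathcal R_{K,a}(z)=a+(z-a)|z-a|^{1/K-1}$ of the plane. It lets $\varphi$ be a Riemann map onto $\mathcal R_{K,a}^{-1}(\Omega)$, a domain omitting $a$, and sets $f=\mathcal R_{K,a}\circ\varphi$.

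Then $g=(\varphi-a)^{1/K}$ is univalent and analytic, with $f-a=T\circ g$ where $T(w)=w^K/|w|^{K-1}$ satisfies $\|DT\|=K$. Hence $|f-a|=|g|$ and $L_f(r)\le K L_g(r)$. Prawitz's theorem applied to $\varphi-a$ gives $g\in H^p$ for $p<K/2$. The Gonz\'alez--Pel\'aez characterization $\|g\|_{H^p}^p\asymp|g(0)|^p+\int_0^1M_1(r,g')^p\,dr$, valid for univalent $g$ and $0<p<2+2/157$, turns this into $\int_0^1L_g^p\,dr<\infty$.

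The number $2/157$ enters the paper's argument only as the range of that theorem, not through an integral-means-spectrum bound. The hypothesis $2+2/157<K/2$ is exactly what places $g$ in $H^p$ on the whole range. No smoothing at the origin is needed, since $f=a+T\circ g$ is smooth because $g$ never vanishes.
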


Thus one may take $\varepsilon=2/157$ in the answer to
the question of Bouchala and Koskela. The constant $2/157$ appears in the conformal arc-length theorem of Gonz\'alez and Pel\'aez~\cite{GP}. 

\subsection{Idea of the proof} The proof combines a radial power map with a holomorphic root
of a translated Riemann map. This construction produces a smooth
$5$-quasiconformal parametrization $f$ and a nonvanishing univalent
analytic function $g$ satisfying $L_f(r)\leq5L_g(r)$.
Prawitz's theorem gives $g\in H^p$ for every $0<p<5/2$.
The desired length integrability then follows from the
arc-length characterization of Gonz\'alez and Pel\'aez.

\section{Preliminaries}
Throughout, $\|A\|$ denotes the operator norm of a real linear
map $A$.

\begin{definition}
Let $K\geq1$, and let $\Omega,\Omega'\subset\C$ be domains.
An orientation-preserving homeomorphism $f:\Omega\to\Omega'$
is called $K$-quasiconformal if
$f\in W^{1,2}_{\mathrm{loc}}(\Omega;\C)$ and
\begin{equation}\label{eq:qc-definition}
    \|Df(z)\|^2\leq KJ_f(z)
\end{equation}
for almost every $z\in\Omega$, where $Df$ denotes the differential of $f$ and $J_f$ is its Jacobian.
\end{definition}

For an analytic function $f$ on $\D$, define
\begin{equation}\label{eq:integral-mean}
    M_s(r,f)
    :=\left(\frac{1}{2\pi}\int_0^{2\pi}
    |f(re^{i\theta})|^s\,d\theta\right)^{1/s},
    \quad 0<s<\infty,\quad 0<r<1,
\end{equation}
and
\begin{equation}\label{eq:maximum-modulus}
    M_\infty(r,f):=\max_{|z|=r}|f(z)|,
    \quad 0<r<1.
\end{equation}

If $f$ is also univalent, then
\begin{equation}\label{eq:conformal-length}
    L_f(r)
    =\int_0^{2\pi}r|f'(re^{i\theta})|\,d\theta
    =2\pi rM_1(r,f'),
    \quad 0<r<1.
\end{equation}
In particular, for every $p>0$,
\begin{equation}\label{eq:r-in-1/2-to-1}
    M_1(r,f')^p\leq\pi^{-p}L_f(r)^p,
    \quad \frac12\leq r<1.
\end{equation}

We shall use the following two results about univalent
functions.

\begin{theorem}[Prawitz's theorem {\cite[Chapter~3]{Duren}}]
\label{thm:Prawitz's-theorem}
Every analytic and univalent function on $\D$ belongs to
$H^p$ for every $0<p<1/2$. The Koebe function shows that
the exponent $1/2$ is sharp.
\end{theorem}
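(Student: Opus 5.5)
The plan is to prove the classical Prawitz inequality for integral means and combine it with the Koebe growth theorem. First reduce to $f(0)=0$. Given a univalent $f$, the function $F:=f-f(0)$ is univalent with $F(0)=0$. Since $0<p<1/2<1$, we have $|f|^p\leq |F|^p+|f(0)|^p$, so $M_p(r,f)^p\leq M_p(r,F)^p+|f(0)|^p$. Hence it suffices to show $\sup_{r}M_p(r,F)<\infty$.

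The key step is the inequality
\begin{equation*}
  M_p(r,F)^p\leq p\int_0^r M_\infty(\rho,F)^p\,\frac{d\rho}{\rho},\qquad 0<r<1,
\end{equation*}
valid for univalent $F$ with $F(0)=0$. To prove it, I set $I(r):=\frac{1}{2\pi}\int_0^{2\pi}|F(re^{i\theta})|^p\,d\theta$. Since $F$ has no zeros in $0<|z|<1$, the function $|F|^p=|F^{p}|$ is smooth there, and differentiating under the integral gives
\begin{equation*}
  rI'(r)=\frac{p}{2\pi}\int_0^{2\pi}|F|^p\,\mathrm{Re}\,\frac{zF'(z)}{F(z)}\,d\theta
  =\frac{p}{2\pi}\int_{\gamma_r}|w|^p\,d\arg w ,
\end{equation*}
where $\gamma_r=F(rS^1)$. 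This uses $\mathrm{Re}(zF'/F)=\partial_\theta\arg F(re^{i\theta})$. Since $\gamma_r$ is a positively oriented Jordan curve around $0$, Green's theorem in polar coordinates $w=\rho e^{i\varphi}$ gives
\begin{equation*}
  \int_{\gamma_r}\rho^p\,d\varphi=p\iint_{F(r\D)}|w|^{p-2}\,dA(w).
\end{equation*}
Near $0$ the integrand $|w|^{p-2}$ is locally integrable, so the singularity can be handled by excising a small disk and letting its radius tend to $0$. Because $F(r\D)\subset\{|w|<M_\infty(r,F)\}$, the area integral is at most $\iint_{|w|<M_\infty}|w|^{p-2}\,dA=2\pi M_\infty(r,F)^p/p$. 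This yields $rI'(r)\leq p\,M_\infty(r,F)^p$. Integrating from $0$ to $r$, using $I(0^+)=0$, gives the displayed inequality. The integral converges at $0$ because $M_\infty(\rho,F)\leq C\rho$.

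Finally, the growth theorem gives $M_\infty(\rho,F)\leq |F'(0)|\rho(1-\rho)^{-2}$. Hence $M_p(r,F)^p\leq p|F'(0)|^p\int_0^1\rho^{p-1}(1-\rho)^{-2p}\,d\rho$. This integral is finite exactly when $2p<1$, so $F\in H^p$ and therefore $f\in H^p$. For sharpness, the Koebe function $k(z)=z(1-z)^{-2}$ satisfies $|k(re^{i\theta})|\asymp|1-re^{i\theta}|^{-2}$ near $\theta=0$. Consequently $M_{1/2}(r,k)^{1/2}\asymp\int_0^{\pi}\bigl((1-r)+\theta\bigr)^{-1}\,d\theta\asymp\log\frac1{1-r}$, which is unbounded, so $k\notin H^{1/2}$.

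The main obstacle is justifying the Green's theorem step rigorously: the orientation and winding of $\gamma_r$ about $0$, and the singular weight at the origin. I would handle it through the excision argument above, which relies on univalence to ensure $\gamma_r$ winds exactly once around $0$.
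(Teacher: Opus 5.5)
Your proof is correct. It is the classical proof of Prawitz's theorem: the inequality $M_p(r,F)^p\le p\int_0^r M_\infty(\rho,F)^p\,d\rho/\rho$ combined with the growth theorem, with the Koebe function giving sharpness. The paper does not prove this statement; it simply cites Duren for it, and your Green's theorem step in the image plane amounts to Hardy's identity $rI'(r)=\frac{p^2}{2\pi}\iint_{r\D}|F|^{p-2}|F'|^2\,dA$ followed by the univalent change of variables $w=F(z)$.
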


Gonz\'alez and Pel\'aez proved the following arc-length
characterization~\cite[Theorem~1]{GP}; see also
\cite[(1.5) and the subsequent discussion]{PRV} for the
precise range of exponents.

\begin{theorem}[Gonz\'alez--Pel\'aez]\label{thm:GP}
Let $f$ be analytic and univalent on $\D$. If
$0<p<2+2/157$, then
\begin{equation}\label{eq:GP}
    \|f\|_{H^p}^p
    \asymp_p
    |f(0)|^p+\int_0^1M_1(r,f')^p\,dr.
\end{equation}
Here $\asymp_p$ denotes two-sided comparability with
positive constants depending only on $p$.
\end{theorem}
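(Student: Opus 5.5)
Since Theorem~\ref{thm:GP} is quoted from \cite{GP}, I only outline how I would prove it. The plan is to prove the two inequalities in \eqref{eq:GP} separately. The direction $\|f\|_{H^p}^p\lesssim_p |f(0)|^p+\int_0^1M_1(r,f')^p\,dr$ should be elementary and hold for all $p>0$. The reverse direction is where univalence enters seriously and where the restriction $p<2+2/157$ appears.

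For the easy direction I would combine three facts. First, $f(r\D)$ is a Jordan domain bounded by a curve of length $L_f(r)$, so its diameter is at most $L_f(r)/2$. This gives
\begin{equation*}
M_\infty(r,f)\le |f(0)|+\tfrac12 L_f(r)=|f(0)|+\pi rM_1(r,f').
\end{equation*}
Second, I would use Prawitz's inequality for univalent functions (the quantitative form behind Theorem~\ref{thm:Prawitz's-theorem}), namely $M_p(r,f)^p\le p\int_0^r M_\infty(t,f)^p\,\frac{dt}{t}$ after normalising $f(0)=0$. Third, I would split the resulting integral at $t=\tfrac12$. On $[\tfrac12,1)$ the weight $1/t$ is harmless and one gets $\int_{1/2}^1 M_1(t,f')^p\,dt$. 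On $(0,\tfrac12)$ I would use $M_\infty(t,f)\le t\,M_\infty(\tfrac12,f')$. I would then bound $M_\infty(\tfrac12,f')$ by the Cauchy/subharmonicity estimate $M_\infty(\tfrac12,f')\lesssim M_1(r,f')$ for $r\in[\tfrac34,1)$ and average over $r$. The case $f(0)\neq0$ follows by translation.

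For the hard direction, $\int_0^1M_1(r,f')^p\,dr\lesssim\|f\|_{H^p}^p$, I would start from a Littlewood--Paley type identity valid for univalent $f$ and all $p>0$ (Yamashita, Baernstein--Girela--Pel\'aez):
\begin{equation*}
\|f\|_{H^p}^p\asymp |f(0)|^p+\int_\D |f'(z)|^2|f(z)|^{p-2}(1-|z|)\,dA(z).
\end{equation*}
For $p\le2$, Hölder's inequality on circles gives
\begin{equation*}
M_1(r,f')\le \Bigl(\tfrac1{2\pi}\int|f'|^2|f|^{p-2}\,d\theta\Bigr)^{1/2}M_{2-p}(r,f)^{(2-p)/2}.
\end{equation*}
Since $M_{2-p}(r,f)\le M_p(r,f)$ when $2-p\le p$, and otherwise via the growth $M_s(r,f)\lesssim\|f\|_{H^p}(1-r)^{1/p-1/s}$, integrating $M_1^p$ in $r$ and applying Hölder in $r$ should close the estimate.

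For $2<p<2+2/157$ this argument loses the needed factor of $(1-r)$. Instead I would compare $|f'|$ with $|f|/(1-r)$ on the part of the circle where $f$ is large, and use integral-means spectrum estimates for the rest. Concretely, the bound $\beta_f(1)\le\frac12-\delta$ with explicit $\delta$ (Pommerenke's estimate of $\int|f'|$ for bounded univalent functions, applied to truncations of $f$ on the level sets $\{|f|\approx 2^k\}$) is where I expect the numerical threshold $2/157$ to come from. I regard this step as the main obstacle. I would first try a dyadic decomposition of $f(r S^1)$ according to $|f|$, and apply the bounded-case length estimate to each piece.
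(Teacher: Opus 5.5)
The paper does not prove this statement; it imports it from \cite{GP} (with \cite{PRV} for the exponent range), so your proposal has to stand on its own. Your direction $\|f\|_{H^p}^p\lesssim_p|f(0)|^p+\int_0^1M_1(r,f')^p\,dr$ is correct for every $p>0$: it follows from diameter $\le L_f(r)/2$, Prawitz's inequality, and the Poisson bound $M_\infty(\tfrac12,f')\le 5M_1(r,f')$ for $r\ge\tfrac34$. But the paper never uses that direction. The proof of Theorem~\ref{thm:main}, via Corollary~\ref{cor:conformal-length-Hardy}, needs only $g\in H^p\Rightarrow\int_0^1L_g(r)^p\,dr<\infty$, i.e.\ $\int_0^1M_1(r,f')^p\,dr\lesssim_p\|f\|_{H^p}^p$. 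For that direction your sketch has a genuine gap on the whole range $1\le p<2+2/157$.

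\emph{The case $p\le 2$.} Hölder in $r$ with exponents $2/p$ and $2/(2-p)$ leaves the factor $\int_0^1(1-r)^{-p/(2-p)}M_{2-p}(r,f)^p\,dr$. For $1\le p<2$ you have $p/(2-p)\ge1$, while $M_{2-p}(r,f)$ increases to a positive limit, so this integral is infinite. At $p=2$ you are left with $\int_0^1 I(r)\,dr$ rather than $\int_0^1 I(r)(1-r)\,dr$.

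This cannot be patched within the same framework. The Hardy--Stein bound $\int_\D|f'|^2|f|^{p-2}(1-|z|)\,dA\lesssim\|f\|_{H^p}^p$, Hölder's inequality and the growth of integral means all hold for every analytic $f$. The target inequality, however, fails for general $H^p$ functions once $p\ge1$. Take $f(z)=\sum_{k\ge1}k^{-1}z^{2^k}\in H^2$. On $[1-2^{-k},1-2^{-k-1}]$ the coefficient bound gives $M_1(r,f')\ge k^{-1}2^kr^{2^k-1}$, hence $\int_0^1M_1(r,f')^p\,dr\gtrsim_p\sum_k k^{-p}2^{k(p-1)}=\infty$ for $1\le p\le2$. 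So univalence has to enter essentially, and your argument never uses it.

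For $p<1$ your growth estimate also has the wrong sign of the exponent; the correct bound is $M_s(r,f)\lesssim\|f\|_{H^p}(1-r)^{1/s-1/p}$. With $s=2-p$ this leaves a logarithmically divergent $\int_0^1(1-r)^{-1}\,dr$. There you need the integral form of the Hardy--Littlewood theorem. Applied directly to $f'$ with $q=1$, $\lambda=p$, it already gives the inequality for all analytic $f$, so $p<1$ is not where the difficulty lies.

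\emph{The case $2<p<2+2/157$.} Here you offer only a heuristic about integral means spectra and dyadic level sets, with no argument. So the part of Theorem~\ref{thm:GP} that the paper actually relies on remains unproved by your proposal.
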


\begin{corollary}\label{cor:conformal-length-Hardy}
Let $f$ be analytic and univalent on $\D$, and let
$0<p<2+2/157$. Then
\begin{equation}\label{eq:length-equivalence}
    f\in H^p
    \quad\Longleftrightarrow\quad
    \int_0^1L_f(r)^p\,dr<\infty.
\end{equation}
\end{corollary}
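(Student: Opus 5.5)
The statement to prove is Corollary~\ref{cor:conformal-length-Hardy}. The plan is to read it off directly from Theorem~\ref{thm:GP}, using the length formula \eqref{eq:conformal-length} to convert between $L_f(r)$ and $M_1(r,f')$. Since $|f(0)|^p<\infty$ always, Theorem~\ref{thm:GP} gives
\[
f\in H^p \iff \int_0^1 M_1(r,f')^p\,dr<\infty ,
\]
and it remains to show that
\[
\int_0^1 M_1(r,f')^p\,dr<\infty \iff \int_0^1 L_f(r)^p\,dr<\infty .
\]

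\emph{Direction $\Leftarrow$.} Assume $\int_0^1 L_f(r)^p\,dr<\infty$. I split $[0,1)$ at $r=1/2$.

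On $[1/2,1)$, inequality \eqref{eq:r-in-1/2-to-1} gives $M_1(r,f')^p\le \pi^{-p}L_f(r)^p$, so the integral over this range is finite.

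On $[0,1/2]$, the function $f'$ is continuous on the closed disc $\overline{D(0,1/2)}$. Hence $M_1(r,f')\le \max_{|z|\le 1/2}|f'(z)|<\infty$, and the integral over this range is bounded as well.

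\emph{Direction $\Rightarrow$.} By \eqref{eq:conformal-length}, $L_f(r)=2\pi r M_1(r,f')\le 2\pi M_1(r,f')$ for every $0<r<1$. Therefore
\[
\int_0^1 L_f(r)^p\,dr\le (2\pi)^p\int_0^1 M_1(r,f')^p\,dr<\infty .
\]
Combining the two directions with Theorem~\ref{thm:GP} yields \eqref{eq:length-equivalence}.

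I expect no serious obstacle. The only point needing care is the behaviour near $r=0$, where the weight $r$ in \eqref{eq:conformal-length} degenerates. This is why the $\Leftarrow$ direction uses \eqref{eq:r-in-1/2-to-1} only on $[1/2,1)$ and handles $[0,1/2]$ by boundedness of $f'$ on compacta.
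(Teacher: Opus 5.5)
Your proposal is correct and follows the paper's own proof essentially verbatim. The forward direction uses $L_f(r)\le 2\pi M_1(r,f')$ together with Theorem~\ref{thm:GP}, and the reverse direction splits at $r=1/2$, applying \eqref{eq:r-in-1/2-to-1} on $[1/2,1)$ and the boundedness of $f'$ on $\{|z|\le 1/2\}$ near the origin.
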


\begin{proof}
Suppose that $f\in H^p$. By
\eqref{eq:conformal-length} and Theorem~\ref{thm:GP},
$$
    \int_0^1L_f(r)^p\,dr
    \leq (2\pi)^p\int_0^1M_1(r,f')^p\,dr
    <\infty.
$$

Conversely, suppose that $\int_0^1L_f(r)^p\,dr<\infty$.
By \eqref{eq:r-in-1/2-to-1},
$$
    \int_{1/2}^1M_1(r,f')^p\,dr
    \leq\pi^{-p}\int_{1/2}^1L_f(r)^p\,dr
    <\infty.
$$
Since $f'$ is continuous on the compact disk
$\{z:|z|\leq1/2\}\subset\D$, the constant
$M:=\max_{|z|\leq1/2}|f'(z)|$ is finite.
Consequently, $M_1(r,f')\leq M$ for $0<r\leq1/2$.
Thus $\int_0^1M_1(r,f')^p\,dr<\infty$, and
Theorem~\ref{thm:GP} gives $f\in H^p$.
\end{proof}

\section{Proof of Theorem~\ref{thm:main}}

We first give the radial power map construction.

\begin{lemma}\label{lem:radial-map}
 Fix $a\in\C$ and an integer $K\geq1$. Define $\mathcal{R}_{K,a}:\C\to\C$ by
 \begin{equation}\label{eq:radial-map}
 \mathcal{R}_{K,a}(z)
 =
 \begin{cases}
  a+(z-a)|z-a|^{\frac{1}{K}-1}, & z\neq a,\\
  a, & z=a.
 \end{cases}
\end{equation}
Then $\mathcal{R}_{K,a}$ is a $K$-quasiconformal map.   
\end{lemma}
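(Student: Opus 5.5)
Translating by $a$ (an isometry, hence conformal), it suffices to treat $a=0$, i.e. the radial stretch $\mathcal R(z)=z|z|^{\alpha-1}$ with $\alpha=1/K\in(0,1]$. For $K=1$ the map is the identity, so we may assume $K\geq2$.

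The plan has four steps.
\begin{enumerate}
\item \emph{Homeomorphism.} In polar coordinates $\mathcal R(re^{i\theta})=r^{\alpha}e^{i\theta}$. Since $r\mapsto r^\alpha$ is an increasing bijection of $[0,\infty)$ and the angle is unchanged, $\mathcal R$ is a bijection of $\C$. Its inverse is $w\mapsto w|w|^{K-1}$. Both maps are continuous, including at $0$, because $|\mathcal R(z)|=|z|^\alpha\to0$. Orientation is preserved, being the identity in angle and increasing in radius.

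\item \emph{Smoothness and the differential.} On $\C\setminus\{0\}$ the map is $C^\infty$. In the orthonormal frame given by the radial and angular unit vectors at $z=re^{i\theta}$, the differential is diagonal. It stretches the radial direction by $\partial_r(r^\alpha)=\alpha r^{\alpha-1}$ and the angular direction by $r^\alpha/r=r^{\alpha-1}$. Equivalently, writing $\mathcal R(z)=z^{(1+\alpha)/2}\bar z^{(\alpha-1)/2}$ gives
\[
\partial_z\mathcal R=\tfrac{1+\alpha}{2}|z|^{\alpha-1},\qquad \partial_{\bar z}\mathcal R=\tfrac{\alpha-1}{2}\tfrac{z}{\bar z}|z|^{\alpha-1}.
\]
Hence $\|D\mathcal R\|=r^{\alpha-1}$ and $J_{\mathcal R}=\alpha r^{2\alpha-2}$. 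This gives
\[
\|D\mathcal R\|^2=r^{2\alpha-2}=K\,J_{\mathcal R}
\]
pointwise on $\C\setminus\{0\}$, so \eqref{eq:qc-definition} holds with equality almost everywhere.

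\item \emph{Sobolev regularity.} We need $\mathcal R\in W^{1,2}_{\mathrm{loc}}(\C)$, which is the only point needing care. Since $|D\mathcal R|^2\lesssim r^{2\alpha-2}$, on a disk $B(0,\rho)$ we get
\[
\int_{B(0,\rho)}|D\mathcal R|^2\,dA\lesssim\int_0^\rho r^{2\alpha-1}\,dr<\infty,
\]
because $2\alpha-1>-1$. Also $\mathcal R$ is locally bounded.

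\item \emph{Removing the point singularity.} It remains to check that the pointwise derivative on $\C\setminus\{0\}$ is the distributional derivative on $\C$. For this I would use the standard removability argument. Take a cutoff $\eta_\varepsilon$ equal to $0$ near $0$ and $1$ outside $B(0,2\varepsilon)$, with $|\nabla\eta_\varepsilon|\lesssim1/\varepsilon$. Integrate by parts against $\eta_\varepsilon\varphi$ for a test function $\varphi$. The error term is bounded by
\[
\int|\mathcal R|\,|\nabla\eta_\varepsilon|\,|\varphi|\,dA\lesssim\varepsilon^{\alpha}\cdot\varepsilon^{-1}\cdot\varepsilon^2\to0 .
\]
Alternatively, one can note that $\mathcal R$ is absolutely continuous on almost every line and invoke the ACL characterization.
\end{enumerate}

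The main obstacle is step 4, justifying that the isolated point $a$ does not obstruct the $W^{1,2}_{\mathrm{loc}}$ condition. The capacity-type estimate above settles it, because $\mathcal R$ is continuous and bounded near $0$ and its gradient is square integrable there.
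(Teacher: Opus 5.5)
Your proposal is correct and follows essentially the same route as the paper. You establish the homeomorphism via the explicit inverse, read off the radial/tangential singular values $\alpha r^{\alpha-1}$ and $r^{\alpha-1}$ to get $\|D\mathcal R\|^2=K\,J_{\mathcal R}$, and use square-integrability of $r^{\alpha-1}$ near $a$. The point singularity is then removed by a boundary term of order $\varepsilon^{\alpha+1}$; your smooth cutoff $\eta_\varepsilon$ plays the role of the paper's integration by parts on punctured balls.
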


\begin{proof}
The inverse is given by
$$
 \mathcal{R}_{K,a}^{-1}(w)
 =
 \begin{cases}
  a+(w-a)|w-a|^{K-1}, & w\neq a,\\
  a, & w=a.
 \end{cases}
$$
Both $\mathcal{R}_{K,a}$ and its inverse are continuous.
Thus $\mathcal{R}_{K,a}$ is a homeomorphism. Write $\alpha=1/K$. For $z\neq a$, the singular values of
$D\mathcal{R}_{K,a}(z)$ in the radial and tangential directions
relative to $a$ are
\begin{equation}\label{eq:radial-singular-values}
 \lambda_r(z)=\alpha|z-a|^{\alpha-1},
 \quad
 \lambda_\theta(z)=|z-a|^{\alpha-1}.
\end{equation}
Since $0<\alpha\leq1$, it follows that
$\|D\mathcal{R}_{K,a}(z)\|=|z-a|^{\alpha-1}$ and
\begin{equation}\label{eq:radial-distortion}
 J_{\mathcal{R}_{K,a}}(z)
 =\alpha|z-a|^{2\alpha-2}>0,
 \quad
 \frac{\|D\mathcal{R}_{K,a}(z)\|^2}
 {J_{\mathcal{R}_{K,a}}(z)}
 =\frac1\alpha=K.
\end{equation}

To verify Sobolev regularity at $a$, observe that, for every
$\delta>0$,
\begin{equation}\label{eq:radial-W12}
 \int_{B(a,\delta)\setminus\{a\}}
 \|D\mathcal{R}_{K,a}(z)\|^2\,dA(z)
 =
 2\pi\int_0^\delta r^{2\alpha-1}\,dr<\infty.
\end{equation}
Since $|\mathcal{R}_{K,a}(z)-a|=|z-a|^\alpha$,
integration by parts on punctured balls, applied to
$\mathcal{R}_{K,a}-a$, gives an inner boundary term of order
$O(\varepsilon^{\alpha+1})$.
Thus the classical derivatives extend as weak derivatives
across $a$, and
$\mathcal{R}_{K,a}\in W^{1,2}_{\mathrm{loc}}(\C;\C)$.
Together with \eqref{eq:radial-distortion}, this proves that
$\mathcal{R}_{K,a}$ is a $K$-quasiconformal map.
\end{proof}

\begin{theorem}\label{thm:transfer}
 Let $\Omega\subsetneq\C$ be a non-empty simply connected domain, let $a\in\C\setminus\Omega$, and let $K\geq1$ be an integer. There exist a $K$-quasiconformal map $f:\D\to\Omega$ and a nonvanishing univalent analytic function $g$ on $\D$ such that
 \begin{align}
  |f(z)-a|&=|g(z)|,\quad z\in \D\label{eq:modulus-identity}\\
  L_f(r)&\leq KL_g(r),\quad 0<r<1, \label{eq:length-transfer}
 \end{align}
 and 
 \begin{equation}\label{eq:transfer-Hardy}
 g\in H^p \quad\text{and}\quad f\in H^p
  \quad\text{for every } 0<p<\frac{K}{2}.
 \end{equation}
\end{theorem}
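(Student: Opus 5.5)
Let $\varphi:\D\to\Omega-a$ be a Riemann map onto the translated domain $\Omega-a=\{w-a:w\in\Omega\}$. This domain is simply connected, does not contain $0$, and is not all of $\C$. Since $\varphi$ never vanishes on the simply connected disk, it has a holomorphic $K$-th root, and we set $g:=\varphi^{1/K}$. Then $g$ is analytic and nonvanishing.

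\emph{Univalence of $g$.} If $g(z_1)=g(z_2)$, then $\varphi(z_1)=g(z_1)^K=g(z_2)^K=\varphi(z_2)$, so $z_1=z_2$.

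\emph{Definition of $f$.} Put $f:=\mathcal{R}_{K,a}^{-1}(a+g)$. For $w\neq0$ we have $\mathcal{R}_{K,a}^{-1}(a+w)=a+w|w|^{K-1}$, so
$$
f=a+g|g|^{K-1}.
$$
Two facts follow. First, $|f-a|=|g|^K=|\varphi|$. Second, the argument of $f-a$ equals the argument of $g$. Hence $f=\mathcal{R}_{K,a}^{-1}\circ(a+g)$. The inverse radial map $\mathcal{R}_{K,a}^{-1}$ is $K$-quasiconformal, being the inverse of the map in Lemma~\ref{lem:radial-map}. The map $a+g$ is conformal. So $f$ is a $K$-quasiconformal homeomorphism onto $a+\mathcal{R}_{0,K}^{-1}$-image of $g(\D)$, where we use the power map $w\mapsto w|w|^{K-1}$. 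This image is $\{\,a+w|w|^{K-1}: w^K\in\Omega-a\,\}$. It equals $\Omega$ because $w|w|^{K-1}$ and $w^K$ have the same modulus $|w|^K$, and the $K$ roots of a given value are permuted consistently.

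\emph{Correcting the normalization of \eqref{eq:modulus-identity}.} As literally stated, \eqref{eq:modulus-identity} reads $|f-a|=|g|$, whereas the construction above gives $|f-a|=|g|^K$. There are two ways to reconcile this.

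One way is to take $g:=\varphi$ and $h:=\varphi^{1/K}$, and set $f:=a+h|h|^{K-1}$. This restores $|f-a|=|g|$, but then the image of $f$ is not obviously $\Omega$, and this is the delicate point.

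The other way, which I would adopt, is a cleaner version: $f=a+\varphi^{1/K}\,|\varphi|^{1-1/K}$ with $g=\varphi$. Here $|f-a|=|\varphi|$ and $\arg(f-a)=\arg(\varphi)/K$. This map is also not onto $\Omega$ in general.

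I therefore expect the intended construction to be the one that actually maps onto $\Omega$. That is $f=a+|\varphi|\,e^{i\arg\varphi}$ composed appropriately, with the key being that the quasiconformal factor only alters modulus. Concretely, I would take
$$
f=a+\frac{\varphi}{|\varphi|}\,|\varphi|^{?}
$$
adjusted so that $f$ is onto $\Omega$. This forces $f-a=\varphi$ itself unless we precompose on the domain side. The correct resolution is to precompose: $f:=a+\varphi\circ\psi$ is not it either. Instead, set
$$
G:=\varphi\circ(\text{radial stretch on } \D).
$$
In this final form I would make the modulus/argument bookkeeping explicit and verify surjectivity onto $\Omega$ directly from the formula. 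This surjectivity check is the step I expect to be the main obstacle.

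\emph{Length estimate \eqref{eq:length-transfer}.} Differentiating the polar form gives the pointwise bound $\|Df\|\le K|g|^{K-1}|g'|$. The radial and tangential derivatives of $w\mapsto w|w|^{K-1}$ are $K|w|^{K-1}$ and $|w|^{K-1}$, and the larger of the two is at most $K|w|^{K-1}$. Integrating over $|z|=r$ yields $L_f(r)\le K\,L_{g^K}(r)$, which under the normalization $g\leftrightarrow\varphi$ is the claimed $L_f\le KL_g$. Smoothness of $f$ is clear because $g\neq0$.

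\emph{Hardy estimate \eqref{eq:transfer-Hardy}.} By Prawitz's theorem (Theorem~\ref{thm:Prawitz's-theorem}), the function $\varphi$ lies in $H^q$ for $q<1/2$. Since $|\varphi^{1/K}|^p=|\varphi|^{p/K}$, we get $\varphi^{1/K}\in H^p$ for $p<K/2$. The same exponent works for $f$, since $|f-a|$ is a power of $|\varphi|$ and $H^p$ membership is insensitive to the additive constant $a$.

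The consistency check I would carry out first is to match the powers between \eqref{eq:modulus-identity} and the range $p<K/2$. That range forces $|f-a|=|\varphi|^{1/K}$, which gives the pair
$$
g=\varphi^{1/K},\qquad f=a+g|g|^{1/K-1}\cdots
$$
after appropriate inversion. I would settle the exponent bookkeeping so that $f$ is onto $\Omega$, $|f-a|=|g|$, and $\|Df\|\le K|g'|$ all hold simultaneously; this is the crux of the plan.
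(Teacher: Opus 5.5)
\textbf{There is a genuine gap.} Your proposal never settles on one construction that is onto $\Omega$, satisfies $|f-a|=|g|$, and gives $g\in H^p$ for $p<K/2$. Moreover, the surjectivity claim in your first construction is false.

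Take $\varphi:\D\to\Omega-a$, $g=\varphi^{1/K}$ and $f=a+g|g|^{K-1}$. A value $g=\rho e^{i\theta}$ gives $f-a=\rho^Ke^{i\theta}$, while $\varphi=g^K=\rho^Ke^{iK\theta}$. Equal moduli do not make the images coincide, because the arguments differ by the factor $K$. For example, if $\Omega-a=\C\setminus(-\infty,0]$ and $K=2$, then $g(\D)$ is a half-plane. Since $w\mapsto w|w|$ preserves arguments, $f(\D)-a$ is that same half-plane, not $\Omega-a$.

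Independently of surjectivity, this construction gives $|f-a|=|g|^K=|\varphi|$, the modulus of a univalent function. So Prawitz's theorem only yields $f\in H^p$ for $p<1/2$, and your statement that ``the same exponent works for $f$'' is wrong. Your later variants ($g=\varphi$, or precomposing with a radial stretch of $\D$, which does not change integral means) either keep this defect or lose surjectivity, as you partly note. The closing ``bookkeeping'' is left unresolved, so no proof results.

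\textbf{The missing idea} is to uniformize the pulled-back domain instead of $\Omega-a$. Then post-compose with $\mathcal{R}_{K,a}$ itself, which sends $|z-a|$ to $|z-a|^{1/K}$, rather than with its inverse. The steps are as follows.
\begin{enumerate}
\item Let $D:=\mathcal{R}_{K,a}^{-1}(\Omega)$. Since $\mathcal{R}_{K,a}$ is a homeomorphism of $\C$ fixing $a$, $D$ is a proper simply connected domain with $a\notin D$.
\item Take a Riemann map $\varphi:\D\to D$ and set $f:=\mathcal{R}_{K,a}\circ\varphi$. This is $K$-quasiconformal and onto $\Omega$ by construction.
\item Set $g:=(\varphi-a)^{1/K}$, which is nonvanishing and univalent. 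Then $|f-a|=|\varphi-a|^{1/K}=|g|$.
\item Write $f-a=T\circ g$ with $T(\rho e^{i\theta})=\rho e^{iK\theta}$. The differential of $T$ has singular values $1$ and $K$, so $|\partial_\theta f|\le K|\partial_\theta g|$ and hence $L_f\le KL_g$.
\item Apply Prawitz's theorem to the univalent function $\varphi-a$ with exponent $p/K<1/2$. This gives $g\in H^p$, and hence $f\in H^p$, for all $p<K/2$.
\end{enumerate}
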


\begin{proof}
Let $K\ge 1$ and $a\notin \Omega$. Let $F=\mathcal{R}_{K,a}:\C\to \C$ be the $K$-quasiconformal map from Lemma~\ref{lem:radial-map}. Consider the non-empty proper simply connected domain $D:=F^{-1}(\Omega)$ and let $\varphi:\D\to D$ be the conformal map from the Riemann mapping theorem. Observe that $a\notin D$ since $F(a)=a$ and $a\notin \Omega$. Therefore the analytic function $\varphi-a$ is nonvanishing on $\D$. Hence there exists an analytic function $h$ on $\D$ such that $e^{h(z)}=\varphi(z)-a$. Define $g(z):=e^{h(z)/K}$. Then $g$ is analytic, nonvanishing, and univalent since $\varphi$ is univalent. Moreover,
\begin{equation}\label{eq:relation-of-g}
    g^K=e^h=\varphi-a
\end{equation}

Define $f:\D\to \Omega$ by $f:=F\circ \varphi$. Then by Lemma~\ref{lem:radial-map} $f$ is a $K$-quasiconformal map. Moreover,
\begin{equation}\label{eq:f-via-g}
 f-a
 =(\varphi-a)|\varphi-a|^{1/K-1}
 =\frac{g^K}{|g|^{K-1}}.
\end{equation}

Taking absolute values proves \eqref{eq:modulus-identity}. 

Observe that $f-a=T\circ g$, where $T:\C\setminus\{0\}\to\C\setminus\{0\}$ is defined by
\begin{equation}\label{eq:T-K}
 T(w):=\frac{w^K}{|w|^{K-1}}.
\end{equation}
For $w=\rho e^{i\theta}$, we have
$T(w)=\rho e^{iK\theta}$. Thus the radial and tangential singular values of $DT(w)$ are $1$ and $K$, respectively, and consequently
\begin{equation}\label{eq:T-derivative}
 \|DT(w)\|=K,
 \quad w\neq0.
\end{equation}

Since $g$ is nonvanishing, both $g$ and $f$ are smooth on $\D$. Applying the chain rule to $f-a=T\circ g$ and using \eqref{eq:T-derivative}, we get

\begin{align}
\nonumber \left|\frac{\partial}{\partial\theta}f(re^{i\theta})\right|
 &=
 \left|
 DT\bigl(g(re^{i\theta})\bigr)
 \left[\frac{\partial}{\partial\theta}g(re^{i\theta})\right]
 \right|\\ \nonumber
 &\leq
 \left\|DT\bigl(g(re^{i\theta})\bigr)\right\|
 \left|\frac{\partial}{\partial\theta}g(re^{i\theta})\right|\\
 &\leq
 K\left|\frac{\partial}{\partial\theta}g(re^{i\theta})\right|,\label{eq:angular-comparison}
\end{align}
for every $0<r<1$ and $\theta\in[0,2\pi]$. For each $0<r<1$, the smooth parametrization $\theta\mapsto f(re^{i\theta})$ traverses $f(rS^1)$ exactly once. Hence the arc-length formula and
\eqref{eq:angular-comparison} give
$$
 L_f(r)
 =
 \int_0^{2\pi}
 \left|\frac{\partial}{\partial\theta}f(re^{i\theta})\right|
 \,d\theta
 \leq
 K\int_0^{2\pi}
 \left|\frac{\partial}{\partial\theta}g(re^{i\theta})\right|
 \,d\theta
 =
 KL_g(r),
$$
which proves \eqref{eq:length-transfer}.

If $0<p<K/2$, then $p/K<1/2$, so
Theorem~\ref{thm:Prawitz's-theorem} gives
$\varphi-a\in H^{p/K}$.
Since $|g(z)|^p=|\varphi(z)-a|^{p/K}$ for every $z\in\D$,
we obtain $g\in H^p$.
By \eqref{eq:modulus-identity}, $f-a\in H^p$, and hence
$f\in H^p$.
\end{proof}

\begin{proof}[Proof of Theorem~\ref{thm:main}]
Fix an integer $K\geq1$ such that $2+2/157<K/2$,
and choose $a\in\C\setminus\Omega$.
Theorem~\ref{thm:transfer} gives maps $f$ and $g$
satisfying \eqref{eq:length-transfer} and
\eqref{eq:transfer-Hardy}. For every $0<p<2+2/157$,
Corollary~\ref{cor:conformal-length-Hardy} and
\eqref{eq:length-transfer} give
$$
\int_0^1L_f(r)^p\,dr
\leq K^p\int_0^1L_g(r)^p\,dr<\infty.
$$
The asserted Hardy membership follows from
\eqref{eq:transfer-Hardy}.
In particular, one may take $K=5$.
\end{proof}

\begin{remark}\label{rem:transfer-principle}
Note that Theorem~\ref{thm:transfer} recovers Theorem~\ref{thm:BK-Hardy}; for a given $p>0$, choose an integer $K>2p$. Moreover, fix $p>0$ and suppose that the following implication holds
for every analytic and univalent function $F$ on $\D$.
\begin{equation}\label{eq:main-implication}
F\in H^p
\quad\Longrightarrow\quad
\int_0^1L_F(r)^p\,dr<\infty.
\end{equation}
Then, for every non-empty proper simply connected domain
$\Omega\subsetneq\C$ and every integer $K>2p$,
Theorem~\ref{thm:transfer} gives a $K$-quasiconformal
homeomorphism $f:\D\to\Omega$ satisfying
$$
\int_0^1L_f(r)^p\,dr<\infty.
$$

More generally, suppose \eqref{eq:main-implication} holds
for every $p$ in an interval $(0,P)$. If an integer $K$ is chosen so that $K>2P$, then the single $K$-quasiconformal map \textcolor{blue}{given} by
Theorem~\ref{thm:transfer} satisfies
$$
\int_0^1L_f(r)^p\,dr<\infty
\quad\text{for every }0<p<P.
$$
Thus any extension of the exponent range in Theorem~\ref{thm:GP} yields a corresponding extension
of the exponent range in Theorem~\ref{thm:main}.
\end{remark}

\begin{remark}
The map $f$ constructed in the proof of
Theorem~\ref{thm:main} is a smooth diffeomorphism.
Consequently, the curves $\Gamma_r=f(rS^1)$, $0<r<1$,
form a smooth foliation of $\Omega\setminus\{f(0)\}$, being the level sets of the smooth submersion
$u:\Omega\setminus\{f(0)\}\to(0,1)$ given by
$u(w)=|f^{-1}(w)|$. Their lengths satisfy
$\int_0^1\ell(\Gamma_r)^p\,dr<\infty$
for every exponent $p$ in the range of the theorem.
Adjoining the point leaf $\{f(0)\}$ gives a singular
foliation of $\Omega$ with a single singular point.
\end{remark}

\section*{Acknowledgements}
The author thanks Pekka Koskela for suggesting the problem and for valuable discussions and insights, including the interpretation of the construction as a foliation by Jordan curves.

\bibliographystyle{amsalpha}
\bibliography{bibliography}

\end{document}